\newtheorem{theorem}{Theorem}
\newtheorem{corollary}[theorem]{Corollary}
\newtheorem{conjecture}[theorem]{Conjecture}
\newtheorem{proposition}[theorem]{Proposition}
\def\qbinom#1#2#3{\genfrac{[}{]}{0pt}{}{#1}{#2}_{#3}}
\def\BN{\mathbbm N}
\def\BZ{\mathbbm Z}
\def\BQ{\mathbbm Q}
\def\BR{\mathbbm R}
\def\BC{\mathbbm C}
\def\BK{\mathbbm K}
\def\BO{\mathbbm O}
\def\BW{\mathbbm W}
\def\bc{\mathbf{c}}
\def\be{\mathbf{e}}
\def\bj{\mathbf{j}}
\def\bn{\mathbf{n}}
\def\bq{\mathbf{q}}
\def\bu{\mathbf{u}}
\def\bv{\mathbf{v}}
\def\bL{\mathbf{L}}
\def\bM{\mathbf{M}}
\def\bN{\mathbf{N}}
\def\bQ{\mathbf{Q}}
\def\bal{{\bm\alpha}}
\def\bom{{\bm\omega}}
\def\w{\omega}
\def\la{\langle}
\def\ra{\rangle}
\def\longto{\longrightarrow}
\def\pt{\partial}
\def\Ann{\operatorname{Ann}}
\renewcommand{\le}{\leqslant}\renewcommand{\leq}{\leqslant}
\begin{document}
%
% --- Author Metadata here ---
\conferenceinfo{ISSAC'12, July 22--25, 2012,}{Grenoble, France}
%\CopyrightYear{2007} % Allows default copyright year (20XX) to be over-ridden - IF NEED BE.
%\crdata{0-12345-67-8/90/01}  % Allows default copyright data (0-89791-88-6/97/05) to be over-ridden - IF NEED BE.
% --- End of Author Metadata ---

%\title{Twisting {\huge $q$}-holonomic sequences by complex roots of unity}
\title{Twisting q-holonomic sequences by complex roots of unity}
%\title{Twisting q-Holonomic Sequences\\ by Complex Roots of Unity}
\numberofauthors{2}
\author{
\alignauthor
Stavros Garoufalidis\titlenote{
  Supported in part by grant DMS-0805078 of the US National Science Foundation.}\\
  \affaddr{School of Mathematics}\\
  \affaddr{Georgia Institute of Technology}\\
  \affaddr{Atlanta, GA 30332-0160, USA}\\
  \email{stavros@math.gatech.edu}
\alignauthor
Christoph Koutschan\titlenote{
  Supported by the Austrian Science Fund (FWF): P20162-N18.}\\
  \affaddr{MSR-INRIA Joint Centre}\\
  \affaddr{INRIA-Saclay}\\
  \affaddr{91893 Orsay Cedex, France}\\
  \email{koutschan@risc.jku.at}
}
\date{\today}

\maketitle
\begin{abstract}
A sequence $f_n(q)$ is $q$-holonomic if it satisfies a nontrivial
linear recurrence with coefficients polynomials in $q$ and~$q^n$. Our
main theorems state that $q$-holonomicity is preserved under twisting,
i.e., replacing $q$ by $\omega q$ where $\omega$ is a complex root of
unity, and under the substitution $q \to q^{\alpha}$ where $\alpha$ is
a rational number.  Our proofs are constructive, work in the
multivariate setting of $\partial$-finite sequences and are
implemented in the Mathematica package \texttt{HolonomicFunctions}.
Our results are illustrated by twisting natural $q$-holonomic
sequences which appear in quantum topology, namely the colored Jones
polynomial of pretzel knots and twist knots.  The recurrence of the
twisted colored Jones polynomial can be used to compute the
asymptotics of the Kashaev invariant of a knot at an arbitrary complex
root of unity.
\end{abstract}

% for the full list, see http://www.acm.org/about/class/ccs98-html
\category{G.2.1}{Discrete Mathematics}{Combinatorics}[Recurrences and difference equations]
\category{G.4}{Mathematical Software}{Algorithm design and analysis}
\category{I.1.2}{Symbolic and Algebraic Manipulation}{Algorithms}[Algebraic algorithms]
% 1991 {\em Mathematics Classification.} Primary 57N10. Secondary 57M25.

% admissible "terms" are:
% Algorithms, Design, Documentation, Economics, Experimentation, 
% Human Factors, Languages, Legal Aspects, Management, Measurement, 
% Performance, Reliability, Security, Standardization, Theory, Verification.
\terms{Algorithms, Theory}

\keywords{$q$-holonomic sequence, $\partial$-finite sequence, multivariate recurrence,
twisting, colored Jones polynomial, pretzel knot, twist knot, quantum topology}
\newpage

\section{Introduction}

A univariate sequence~$\big(f_n(q)\big)_{n\in\BN}$ is
called \emph{$q$-holonomic} if it satisfies a nontrivial linear
recurrence with coefficients that are polynomials in $q$ and~$q^n$;
the indeterminate~$q$ here is assumed to be transcendental over~$\BK$
which, for the moment, is an arbitrary but fixed field of
characteristic zero.  More precisely, $f_n(q)$ is $q$-holonomic if
there exists a nonnegative integer~$d$ and bivariate polynomials
$c_j(u,v)\in\BK[u,v]$ for $j=0,\dots,d$ with $c_d(u,v)\neq0$ such that
for all $n\in\BN$ the following recurrence is satisfied:
\begin{equation}
\label{eq.qhol}
\sum_{j=0}^d c_j(q,q^n) f_{n+j}(q)=0.
\end{equation}
The notion of $q$-holonomic sequences was introduced by Zeilberger~\cite{Z} in the
early 1990s and occurs frequently in enumerative
combinatorics~\cite{FS,St} and more recently also in quantum
topology~\cite{GL1}. Zeilberger and Wilf~\cite{WZ} proved a
\emph{Fundamental Theorem} (i.e., multisums of $q$-proper hyper\-geo\-metric
terms are $q$-holonomic), and their proof was algorithmic and
compu\-ter-implemented; an excellent introduction into the subject is
given in~\cite{PWZ}.

It is well known that the class of $q$-holonomic sequences is closed
under certain operations that include addition and
multiplication~\cite{KoepfRajkovicMarinkovic07,KauersKoutschan09}.
These operations can be executed algorithmically on the level of
recurrences, i.e., given recurrences for two $q$-holonomic sequences
$f_n(q)$ and $g_n(q)$, a recurrence for $f_n(q)+g_n(q)$ and one for
$f_n(q)\cdot g_n(q)$ can be computed; see the packages
\texttt{qGeneratingFunctions}~\cite{KauersKoutschan09} and
\texttt{HolonomicFunctions}~\cite{Ko1} for implementations in
Mathematica, as well as the Maple package
\texttt{Mgfun}~\cite{Chyzak98}.  

The aim of the present article is to establish two new closure
properties for $q$-holonomic sequences. The first one is
\emph{twisting by roots of unity}: for a given complex
number~$\w\in\BC$, we call $f_n(\w q)$ the \emph{twist} of the
sequence~$f_n(q)$ by~$\w$. Closure of $q$-holonomicity under twisting
by~$\w$ requires that $\w$ is a complex root of unity as the example
of $f_n(q)=q^{n^2}$ shows; see Remark 1.5 and Section 3.2
of~\cite{GV}. The second closure property introduced here is the
substitution of~$q$ by $q^{\alpha}$ where $\alpha$ is a rational
number.

So far the discussion was about univariate sequences. A generalization
of $q$-holonomy to a multivariate setting was given
in~\cite{Sabbah90}.  The theory of $q$-holonomic sequences parallels
to the geometric theory of holonomic systems, see~\cite{SST} and
references therein.  A different generalization of univariate
$q$-holonomic sequences to several variables is given by the class of
\emph{$\pt$-finite functions} introduced by
Chyzak~\cite{Chy,Chyzak00}. This notion is a little weaker than
$q$-holonomicity but very useful in practice, as the execution of
closure properties (e.g., addition and multiplication) is rather
simple and requires merely linear algebra. In our $q$-setting the
definition can be stated as follows: a multivariate sequence
$f_{\bn}(\bq)$ is $\pt$-finite if for every variable
$\bn=n_1,\dots,n_r$ it satisfies a linear recurrence of the
form~\eqref{eq.qhol}:
\begin{equation}
\label{eq.qholmult}
  \sum_{j=0}^{d_k} c_{k,j}(\bq,q_{a_1}^{n_1},\dots,q_{a_r}^{n_r}) f_{\bn+j\be_k}(\bq) = 0
\end{equation}
for $k=1,\dots,r$.
We use bold letters for vectors and denote by $\be_k$ the $k$-th unit
vector of length~$r$.  As above, the $d_k$'s are nonnegative integers
and the $c_{k,j}$'s are multivariate polynomials in $\BK[\bu,\bv]$
with $c_{k,d_k}\neq 0$. The indeterminates $\bq=q_1,\dots,q_s$ with
$1\le s\le r$ are assumed to be transcendental over~$\BK$ and the
indices $a_1,\dots,a_r$ need to be between~$1$ and~$s$.  In most
applications just a single indeterminate~$q$ occurs, i.e., $s=1$. From
the definitions~\eqref{eq.qhol} and~\eqref{eq.qholmult} it is
immediately clear that for univariate sequences (i.e., for $r=1$) the notions
$q$-holonomic and $\pt$-finite coincide. A more detailed exposition on
holonomy and $\pt$-finiteness can be found in~\cite{Ko1}.

The twist of the sequence $f_{\bn}(\bq)$ by complex numbers
$\bom=\w_1,\dots,\w_s$ is the sequence
$f_{\bn}(\w_1q_1,\dots,\w_sq_s)$; Theorem~\ref{thm.twist} states that
$\pt$-finiteness is preserved under twisting by complex roots of
unity. On the other hand, one may be interested in the sequence
$f_{\bn}(q_1^{\alpha_1},\dots,q_s^{\alpha_s})$ for rational numbers
$\alpha_1,\dots,\alpha_s\in\BQ$; Theorem~\ref{thm.subs} states that
$\pt$-finiteness is also preserved under this substitution.

A motivation for our work was the effective computation of the
expansion of the Kashaev invariant of a knot, i.e., its colored Jones
polynomial around complex roots of unity, that was initiated by
Zagier~\cite{DGLZ,Za2}; see also~\cite{Ga4}.  Using our results, such
an expansion can now be achieved and will be the focus of several
separate publications~\cite{DG,GZ}. More details and some examples are
given in Section~\ref{sec.QT}.

\section{Twisting Preserves {\large $\pt$}-Finiteness}

\subsection{Operator Notation and Left Ideals}

To state our results, it will be helpful to write recurrences
like~\eqref{eq.qhol} in operator form. For this purpose consider the
operators~$L$ and $M$ which act on a sequence $f_n(q)$ by
\begin{align*}
  L f_n(q) & = f_{n+1}(q),\\
  M f_n(q) & = q^n f_n(q),
\end{align*}
and satisfy the $q$-commutation relation $LM=qML$. The noncommutative
algebra that is generated by $L$ and $M$ modulo $q$-commutation is
denoted by $\BW=\BK(q)[M]\la L\ra$ and is called the \emph{first
  $q$-Weyl algebra}. If one wants to allow division by~$M$ then it is
convenient to utilize a noncommutative \emph{Ore algebra}
(see~\cite{Chy,Chyzak00} for more details) which is denoted by
$\BO=\BK(q,M)\langle L\rangle$. Clearly the inclusion $\BW\subset\BO$
holds.

Similarly, for representing the system of
recurrences~\eqref{eq.qholmult}, the operators $\bL=L_1,\dots,L_r$ and
$\bM=M_1,\dots,M_r$ are introduced, which act on a multivariate
sequence $f_{\bn}(\bq)$ by
\begin{equation}\label{eq.LMq}
\begin{split}
  L_kf_{\bn}(\bq) & = f_{\bn+\be_k}(\bq),\\
  M_kf_{\bn}(\bq) & = q_{a_k}^{n_k} f_{\bn}(\bq),
\end{split}
\end{equation}
for $k=1,\dots,r$ and with the same notation as
in~\eqref{eq.qholmult}. Again the above operators $q$-commute, i.e.,
they satisfy
\begin{align*}
  L_kM_k & = q_{a_k}M_kL_k,\\
  L_jM_k & = M_kL_j \quad\text{for } j\neq k.
\end{align*} 
More generally, we can state the $q$-commutation for arbitrary
expressions in~$\bM$:
\[
  L_kF(\bM)=F(M_1,\dots,M_{k-1},q_{a_k}M_k,M_{k+1},\dots,M_r)L_k.
\]
In operator form, Equation~\eqref{eq.qholmult} is written as
$P_kf=0$ where 
\begin{equation}
\label{eq.Pkf}
P_k=\sum_{j=0}^{d_k} c_{k,j}(\bq,\bM)L_k^j 
\end{equation}
for $k=1,\dots,r$. The operators $P_1,\dots,P_r$ are regarded as
elements of the Ore algebra $\BO=\BK(\bq,\bM)\la\bL\ra$. The
algebra~$\BO$ can be viewed as the multivariate polynomial ring in the
indeterminates $L_1,\dots,L_r$ with coefficient field being the
rational functions in $\bq$ and $\bM$, subject to the above stated
$q$-commutation relations.  Given a multivariate sequence
$f_{\bn}(\bq)$, the set
\[
  \Ann_{\BO}(f)=\{P \in \BO \, \mid P f=0 \}
\]
is a left ideal of $\BO$, the so-called annihilator of~$f$ with
respect to the algebra~$\BO$. Left ideals in~$\BO$ have well-defined 
{\em dimension} and {\em rank} which can be computed for instance by
(left) Gr\"obner bases. In this terminology, a
multivariate sequence $f_{\bn}(\bq)$ is $\pt$-{\em finite} with
respect to~$\BO$ if $\Ann_{\BO}(f)$ is a zero-dimensional left ideal in~$\BO$.
For example, if $f_{\bn}(\bq)$ satisfies~\eqref{eq.qholmult}, then it
is annihilated by the operators $P_1,\dots, P_r$ of
Equation~\eqref{eq.Pkf}.  The latter generate a zero-dimensional ideal
of rank at most~$\prod_{k=1}^rd_k$.  Note, however, that the set
$\{P_1,\dots,P_r\}$ is not a left Gr\"obner basis of that ideal in
general (Buchberger's product criterion does not hold in
noncommutative rings).

\subsection{Main Theorems}

We have now prepared the stage for stating our main results.  To keep
the presentation concise, it is assumed from now on that the
field~$\BK$ contains all complex roots of unity.

\begin{theorem}
\label{thm.twist}
Let $f_{\bn}(\bq)=f_{n_1,\dots,n_r}(q_1,\dots,q_s)$ be a multivariate
$\pt$-finite sequence, and let $\w_j\in\BC$ be an $m_j$-th root of unity 
for $j=1,\dots,s$. Then, the twisted sequence $g_{\bn}(\bq)=
f_{\bn}(\w_1q_1,\dots,\w_sq_s)$ is $\pt$-finite as well. 

Moreover, let $I$ be a zero-dimensional left ideal of rank~$R$
such that $If=0$. From a generating set of~$I$, a Gr\"obner basis 
of a zero-dimensional left ideal~$J$ with $Jg=0$ can be obtained
and its rank is at most $R\cdot m_{a_1}\cdots m_{a_r}$.
\end{theorem}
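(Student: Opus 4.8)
The plan is to reduce the twist to a combination of operations on sequences that are already known to preserve $\pt$-finiteness. The key observation is that the twist $q_j \mapsto \w_j q_j$ with $\w_j^{m_j}=1$ is \emph{periodic}: if we split the index $n_k$ according to its residue modulo the relevant $m_{a_k}$, then on each residue class the factor $\w_{a_k}^{n_k}$ is a constant root of unity. So first I would write $n_k = m_{a_k} \bar n_k + \rho_k$ with $\rho_k \in \{0,\dots,m_{a_k}-1\}$ and consider, for each fixed residue tuple $\bm\rho$, the subsequence $h^{(\bm\rho)}_{\bar\bn}(\bq) = g_{m_{a_1}\bar n_1+\rho_1,\dots}( \bq)$. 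Since $g_{\bn}(\bq) = f_{\bn}(\w_1 q_1,\dots,\w_s q_s)$ and each $\w_{a_k}^{n_k} = \w_{a_k}^{\rho_k}$ is now a \emph{constant} on the class, $h^{(\bm\rho)}$ is obtained from the $m_{a_k}$-fold section of $f$ by multiplying the coefficients of its annihilating operators by fixed roots of unity — an operation that visibly keeps a zero-dimensional ideal zero-dimensional and does not increase the rank. The sequence $g$ is then the interleaving of the finitely many $\pt$-finite sequences $h^{(\bm\rho)}$, and interleaving is a known closure operation.

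Concretely, I would carry this out at the level of operators in $\BO$. Taking the $m_{a_k}$-fold section in the direction $k$ replaces $L_k$ by $L_k^{m_{a_k}}$ (up to a shift) and $M_k$ by $M_k^{m_{a_k}}$, so the sectioned annihilator is generated by operators whose $L_k$-degrees are divided by $m_{a_k}$; this multiplies the rank by at most $\prod_k m_{a_k} = m_{a_1}\cdots m_{a_r}$. Since this factor is exactly the bound claimed in the theorem, the twisting step itself must be designed to cost \emph{nothing} in rank — which is consistent with the first paragraph, where twisting only rescales coefficients. To obtain the operator $J$, I would: (1) compute a Gr\"obner basis of $I$ in $\BO$; (2) substitute $M_k \to \w_{a_k}^{?}M_k$ formally, i.e. apply the $\BK$-algebra automorphism of $\BO$ sending $M_k \mapsto \lambda_k M_k$ with $\lambda_k$ a root of unity and fixing $L_k$ — this is well-defined because $L_kM_k = q_{a_k}M_kL_k$ is preserved, and it carries $\Ann(f)$ to $\Ann$ of the twist at that particular residue; (3) assemble the pieces over all residues $\bm\rho$ by the standard interleaving construction, whose output ideal has rank bounded by the stated product. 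A Gr\"obner basis of $J$ is then obtained by one more Gr\"obner basis computation from the union of the generating sets produced in step (3).

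The main obstacle I anticipate is the bookkeeping of the section/interleaving correspondence in the \emph{multivariate} Ore-algebra setting: one must check that taking an $m$-fold section in each of the $r$ directions, then interleaving back, really is an exact inverse pair on $\pt$-finite sequences, that the dimension stays zero throughout, and that the rank bound multiplies as claimed rather than, say, by $\prod_k m_{a_k}^{\,?}$ with a worse exponent, or being inflated by the interleaving step. In particular the interleaving of $\prod_k m_{a_k}$ sequences each of rank (at most) $R$ must be shown to have rank at most $R\cdot m_{a_1}\cdots m_{a_r}$, not $R\cdot\big(\prod_k m_{a_k}\big)^{\!2}$ or similar — this is where a careful choice of which variables get sectioned, and a degree-counting argument on the Gr\"obner staircase, will be needed. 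The automorphism argument in step (2) is routine; the genuinely delicate part is making the section–interleaving bijection and its rank accounting precise and checking that no spurious factor creeps in.
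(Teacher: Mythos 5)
Your plan correctly identifies the periodicity of $\w_{a_k}^{n_k}$ as the crux, but the section--and--interleave route both misses the central idea of the paper's proof and, as you yourself suspect, does not deliver the stated rank bound. The paper does \emph{not} section in the $L_k$-direction at all: it looks for operators in $J=I\cap\BK(\bq,\bN)\langle\bL\rangle$ with $N_k=M_k^{m_{a_k}}$, that is, elements of $I$ in which only the powers of $M_k$ are constrained to be multiples of $m_{a_k}$ while the $L_k$-powers stay arbitrary. For such an operator the substitution $q_j\to\w_jq_j$ already produces an element of $\BO$ annihilating $g_\bn$ for \emph{all} $\bn$, not just on a residue class, because $(\w_{a_k}q_{a_k})^{m_{a_k}n_k}=q_{a_k}^{m_{a_k}n_k}$ has no stray root-of-unity factor. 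No sectioning of the index, no automorphism of $M_k$, and no interleaving are needed. The ansatz $A=\sum_{j=0}^{d}c_j(\bq,\bN)L_k^j$, reduced modulo a Gr\"obner basis of $I$ and compared coefficientwise in the $M_k$-degrees below $m_{a_k}$, yields a linear system of $R\prod_k m_{a_k}$ equations, so $d=R\prod_k m_{a_k}$ already forces a nontrivial solution, which is exactly the stated bound.

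Your route, by contrast, needs operators in which \emph{both} $M_k$ and $L_k$ occur only to powers divisible by $m_{a_k}$, a strictly smaller subalgebra to intersect $I$ with. Rerunning the ansatz under this stronger constraint forces $L_k^{m_{a_k}}$-degree up to $R\prod_k m_{a_k}$, hence $L_k$-degree up to $R\,m_{a_k}\prod_k m_{a_k}$, and the subsequent interleaving of $\prod_k m_{a_k}$ residue-class pieces threatens a further factor, landing near $R\cdot\bigl(\prod_k m_{a_k}\bigr)^2$. This is not a bookkeeping nuisance: the operators you obtain on different residue classes genuinely differ (by powers of $\w_{a_k}^{\rho_k}$ attached to $M_k$), and they can only be merged into a single element of $\BO$ when that $\w$-dependence cancels, i.e.\ when $M_k$ already appears only to powers divisible by $m_{a_k}$ --- which is precisely the paper's starting point. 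Moreover, the ``automorphism $M_k\mapsto\lambda_kM_k$'' in your step (2) yields an operator that annihilates $g$ only at indices with the corresponding residue, yet $L_k$ shifts $n_k$ out of that residue class, so it is not an internal recurrence for the section; and you would additionally need a lemma, with explicit rank control, that multivariate $\pt$-finiteness is closed under interleaving. All of these obstacles disappear once one notices that the substitution $q_j\to\w_jq_j$ can be applied directly to operators of $\BK(\bq,\bN)\langle\bL\rangle$ inside $I$.
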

\begin{proof}
With the notation introduced in~\eqref{eq.qholmult} and~\eqref{eq.LMq}
we fix the Ore algebra $\BO=\BK(\bq,\bM)\langle\bL\rangle$ so that $I$
is a left ideal in~$\BO$. We now shall show that sufficiently many
operators in~$\BO$ can be found which annihilate the
sequence~$g_{\bn}(\bq)$. A naive attempt to obtain some recurrences
for~$g$ is to substitute $q_j$ by $\w_jq_j$ (for $1\leq j\leq s$) in
the recurrences for~$f$.  Indeed, the result are valid recurrences
for~$g$, but in general they cannot be represented in the
algebra~$\BO$ since they contain terms of the form $\w_j^{n_k}$.
However, for an operator $P\in I$ this substitution is admissible (in
the sense that the result is in~$\BO$) if for each~$k$ the variable
$M_k$ appears in~$P$ only with powers that are multiples of~$m_{a_k}$
(for sake of readability we will write $m(k)$ instead of $m_{a_k}$).
The idea of the proof is to show that such operators exist and that
they generate a zero-dimensional ideal of rank at most $R\cdot
m(1)\cdots m(r)=:\tilde{R}$.

First we introduce a new set of variables $\bN=N_1,\dots,N_r$ such
that $N_k=M_k^{m(k)}$. In this notation the goal is to obtain a set of
generators for the left ideal
\[
  J=I \cap \BK(\bq,\bN)\langle\bL\rangle.
\]
For this purpose, fix~$k$ and consider an ansatz operator of the form
\[
  A = \sum_{j=0}^d c_j(\bq,\bN)L_k^j
\]
where the unknowns $\bc=c_0,\dots,c_d$ are assumed to be rational
functions in $\bq$ and~$\bN$. The remainder of $A$ modulo the left
ideal~$I$ can be computed by reducing it with a left Gr\"obner basis
of~$I$. After clearing denominators, this remainder is a linear
combination of $R$ different power products $\bL^\bal$; its
coefficients are polynomials in $\bq$ and $\bM$, and in the unknowns
$\bc$ which occur linearly. The claim that $A$ be an annihilating
operator for~$f$ is achieved by equating all those coefficients to
zero. This yields a system of $R$ equations in the unknowns~$\bc$.  By
making use of the new variables $\bN$ and simple rewriting, it can be
achieved that the degree of $M_k$ is smaller than $m(k)$ for $1\leq
k\leq r$.  Coefficient comparison w.r.t. the variables $\bM$ enforces
that the unknowns $\bc$ depend only on $\bq$ and $\bN$, and converts
each equation into a set of at most $m(1)\cdots m(r)$
equations. Choosing $d=\tilde{R}$ in $A$
therefore produces a linear system with $d$ equations in $d+1$
unknowns. Thus the existence of a nontrivial solution is guaranteed.
The substitutions $q_j\to\w_jq_j$ can now be performed without
problems and yield an annihilating operator for~$g$. Repeating the
above procedure for $k=1,\dots,r$ shows that $g$ is $\pt$-finite.

However, in practice one would not proceed along these lines. Instead
of pure recurrence operators~$A$ (i.e., univariate polynomials
in~$\BO$), it is advantageous to loop over the support of~$A$ and
increase it according to the FGLM algorithm (this is made explicit in
Algorithm~1 below). This procedure guarantees that the resulting
operators form a Gr\"obner basis, and at the same time shows that the
rank of the ideal they generate is at most $\tilde{R}$.  For the
contrary, let $R'$ denote the rank of~$J$ and assume that it is
strictly greater than~$\tilde{R}$; this means that a Gr\"obner basis
of $J$ has $R'$ irreducible monomials under its stairs, i.e., there is
no operator in $J$ whose support is a subset of these monomials.  On
the other hand, an ansatz~$A$ (as above) whose support consists of all
irreducible monomials will lead to a linear system with $\tilde{R}$
equations and $R'$ unknowns. By the assumption $R'>\tilde{R}$ a
nontrivial solution exists, in contradiction to the fact that the
support of $A$ consists of irreducible monomials only.
\end{proof}

Since many applications deal with sequences in a single variable,
and in order to justify the title of this paper, the following
corollary is stated explicitly, even though it is a trivial
consequence of Theorem~\ref{thm.twist}.
\begin{corollary}\label{cor.twist}
Let $f_n(q)$ be a $q$-holonomic sequence that satisfies a recurrence
of the form~\eqref{eq.qhol} of order~$d$. Then for any root of unity
$\omega\in\BC$ of order~$m$ the sequence~$f_n(\omega q)$ is
$q$-holonomic as well and satisfies a recurrence of order at most
$m\cdot d$.
\end{corollary}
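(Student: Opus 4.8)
\textbf{Proof plan for Corollary~\ref{cor.twist}.}
The plan is to deduce the corollary directly from Theorem~\ref{thm.twist} applied in the univariate case $r=s=1$, and then to translate the conclusion about the rank of a left ideal back into the language of scalar recurrences.

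First I would pass to operator form: write the given order-$d$ recurrence as $Pf=0$ with $P=\sum_{j=0}^{d}c_j(q,M)L^j$, regarded as an element of the Ore algebra $\BO=\BK(q,M)\la L\ra$. Since $c_d\neq 0$ is a unit in the coefficient field, the single operator~$P$ generates a zero-dimensional left ideal $I=\BO P$ of rank~$d$ with $If=0$ (this is the univariate instance of the discussion following~\eqref{eq.Pkf}). Feeding $I$ into Theorem~\ref{thm.twist} with $s=1$, $\w_1=\w$ and $m_1=m$ then produces a Gr\"obner basis of a zero-dimensional left ideal $J\subset\BO$ annihilating $g_n(q):=f_n(\w q)$, whose rank is at most $d\cdot m$.

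The second step exploits that $\BO=\BK(q,M)\la L\ra$ is an Ore extension of the field~$\BK(q,M)$ (with $L\,a(q,M)=a(q,qM)\,L$ and no derivation), hence a principal left ideal domain in which the rank of a left ideal equals the $L$-degree of a generator. Therefore $J=\BO Q$ for a single nonzero operator~$Q$, and combining this with the previous step gives $\deg_L Q\le m\cdot d$ together with $Qg=0$. It remains only to turn $Q$ into a recurrence of the form~\eqref{eq.qhol}, whose coefficients must be polynomials rather than rational functions. Let $p\in\BK[q,M]$ be a common denominator of the coefficients of~$Q$; then $pQ$ has coefficients in $\BK[q,M]$, and since $p$ and those coefficients all lie in the commutative ring $\BK[q,M]$ (even though $L$ and $M$ do not commute) this left multiplication leaves the $L$-degree unchanged. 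Reading $pQ$ off as a scalar recurrence yields $\sum_{j=0}^{d'}\tilde c_j(q,q^n)g_{n+j}(q)=0$ with $\tilde c_j\in\BK[q,q^n]$ and $d'\le m\cdot d$, which is exactly the claim (recall that $q$-holonomic and $\pt$-finite coincide for $r=1$).

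Since everything is a specialization of Theorem~\ref{thm.twist}, I do not expect a genuinely hard step. The only points that require a little care are the two translations just used: that in the univariate Ore algebra the rank of a left ideal equals the $L$-degree of its principal generator, so that the bound ``rank $\le m\cdot d$'' really does deliver a single recurrence of order at most $m\cdot d$; and that clearing denominators to move from $\BO$ back to polynomial coefficients does not inflate the order.
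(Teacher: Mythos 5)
Your proof is correct and is exactly the unwinding the paper has in mind when it calls Corollary~\ref{cor.twist} ``a trivial consequence'' of Theorem~\ref{thm.twist}; the paper does not write out this argument, and your version correctly supplies the two bookkeeping facts it leaves implicit, namely that in the univariate Ore algebra $\BO=\BK(q,M)\la L\ra$ every left ideal is principal with rank equal to the $L$-degree of its monic generator, and that left-multiplying by a common denominator in $\BK[q,M]$ (a degree-zero element in $L$) preserves the $L$-degree while moving back to polynomial coefficients.
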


In \cite[Thm. 1.5]{GV} it was shown that the specialization of a
$q$-holonomic sequence $f_n(q) \in \BZ[q^{\pm 1}]$ to a complex root
of unity~$\w$ is a holonomic sequence, in other words, that $f_n(\w)$
satisfies a linear recurrence with coefficients polynomials
in~$n$. The present paper reduces the proof of the above result to the
case of~$\w=1$.

\begin{theorem}
\label{thm.subs}
Let $f_{\bn}(\bq)=f_{n_1,\dots,n_r}(q_1,\dots,q_s)$ be a multivariate
$\pt$-finite sequence, and let $\alpha_1,\dots,\alpha_s\in\BQ$. Then, the
sequence $g_{\bn}(\bq)= f_{\bn}(q_1^{\alpha_1},\dots,q_s^{\alpha_s})$ is
$\pt$-finite as well.

Moreover, let $I$ be a zero-dimensional left ideal of rank~$R$ such
that $If=0$. From a generating set of~$I$, a Gr\"obner basis of a
zero-dimensional left ideal~$J$ with $Jg=0$ can be obtained and its
rank is at most $R\cdot m_1\cdots m_s\cdot m_{a_1}\cdots m_{a_r}$,
where $m_j\in\BN$ denotes the denominator of $\alpha_j$.
\end{theorem}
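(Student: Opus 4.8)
The plan is to factor the substitution $q_j\to q_j^{\alpha_j}$ into an integer-power part followed by a root-extraction part. Write $\alpha_j=p_j/m_j$ with $p_j\in\BZ\setminus\{0\}$, $m_j\in\BN$ and $\gcd(|p_j|,m_j)=1$, and set $h_{\bn}(\bq)=f_{\bn}(q_1^{p_1},\dots,q_s^{p_s})$, so that $g_{\bn}(\bq)=h_{\bn}(q_1^{1/m_1},\dots,q_s^{1/m_s})$. I would show separately (i) that $h$ is $\pt$-finite, annihilated by a zero-dimensional left ideal of rank at most~$R$ whose Gr\"obner basis is obtainable from one of~$I$, and (ii) that replacing a $\pt$-finite sequence of rank~$R$ by the one obtained from the substitution $q_j\to q_j^{1/m_j}$ multiplies the rank by at most $m_1\cdots m_s\cdot m_{a_1}\cdots m_{a_r}$; combining the two yields the theorem.

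For (i) I would use the algebra endomorphism~$\phi$ of $\BO=\BK(\bq,\bM)\la\bL\ra$ determined by $q_j\mapsto q_j^{p_j}$, $M_k\mapsto M_k^{p_{a_k}}$ and $L_k\mapsto L_k$. That $\phi$ is well defined and injective is checked on the relations $L_kM_k=q_{a_k}M_kL_k$, using $L_kM_k^e=q_{a_k}^eM_k^eL_k$ for all $e\in\BZ$; comparing the two sides on monomials $q^{\bj}M^{\bv}\bL^{\bal}$ shows that $\phi(P)h=0$ whenever $Pf=0$. Since $\phi$ fixes every~$L_k$ and only rescales coefficients, it carries a left Gr\"obner basis of~$I$ (for any term order on the $\bL$-power products) to a left Gr\"obner basis of $\la\phi(I)\ra$ with the same leading monomials, hence with the same set of standard monomials. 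Therefore $\la\phi(I)\ra$ is zero-dimensional of rank~$R$ and annihilates~$h$, which is (i).

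For (ii) I would argue as in the proof of Theorem~\ref{thm.twist}. Introduce indeterminates $\tilde q_j$ with $\tilde q_j^{m_j}=q_j$ ($j=1,\dots,s$) and $\tilde M_k$ with $\tilde M_k^{m_{a_k}}=M_k$ ($k=1,\dots,r$), and form the Ore algebra $\tilde\BO=\BK(\tilde\bq,\tilde\bM)\la\bL\ra$ with $L_k\tilde M_k=\tilde q_{a_k}\tilde M_kL_k$. The assignment $\tilde q_j\mapsto q_j$, $\tilde M_k\mapsto M_k$, $L_k\mapsto L_k$ is an algebra isomorphism $\tilde\BO\to\BO$, and identifying $g$ with $h$ through $\tilde q_j\leftrightarrow q_j$ one checks (on monomials) that it carries $\Ann_{\tilde\BO}(g)$ onto $\Ann_{\BO}(h)$; in particular the renamed copy $\tilde K$ of $\la\phi(I)\ra$ is a zero-dimensional left ideal of rank~$R$ contained in $\Ann_{\tilde\BO}(g)$, and its Gr\"obner basis is known from (i). Because $\tilde q_j^{m_j}=q_j$ and $\tilde M_k^{m_{a_k}}=M_k$, the algebra $\BO$ reappears as the subalgebra $\BK(\tilde q_1^{m_1},\dots,\tilde q_s^{m_s},\tilde M_1^{m_{a_1}},\dots,\tilde M_r^{m_{a_r}})\la\bL\ra$ of $\tilde\BO$, and the ideal I am after is $J=\tilde K\cap\BO$. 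This intersection is extracted by the FGLM-based ansatz from the proof of Theorem~\ref{thm.twist}: requiring an ansatz operator with coefficients in $\BK(\tilde\bq,\tilde\bM)$ to reduce to zero modulo~$\tilde K$ gives at most~$R$ linear conditions on its coefficients, and coefficient comparison with respect to $\tilde\bq$ and $\tilde\bM$ — which also forces the coefficients to lie in $\BK(\tilde q_1^{m_1},\dots,\tilde M_r^{m_{a_r}})$ — replaces each condition by at most $m_1\cdots m_s\cdot m_{a_1}\cdots m_{a_r}$ of them. Hence an ansatz whose support has more than $R\cdot m_1\cdots m_s\cdot m_{a_1}\cdots m_{a_r}$ elements is nontrivially solvable, so FGLM returns a Gr\"obner basis of~$J$ with at most that many standard monomials, which is the claimed rank bound; since $J\subseteq\Ann_{\tilde\BO}(g)$ and $J\subseteq\BO$, it annihilates~$g$ inside the original algebra.

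The step I expect to be the real obstacle is (i): it is not obvious a priori that replacing $q_j$ by $q_j^{p_j}$ leaves the rank unchanged, and the reason it does is exactly that $\phi$ does not alter the $\bL$-part of any operator, so Gr\"obner bases — and with them the standard monomials that count the rank — transport verbatim. Step (ii) requires no new idea beyond Theorem~\ref{thm.twist}; its only novel feature is that here the coefficient field genuinely grows (unlike in the twisting case, where $\BK(\omega_jq_j)=\BK(q_j)$), and this is precisely the source of the extra factors $m_1\cdots m_s$ in the bound.
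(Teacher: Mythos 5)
Your proof is correct, and it reaches the theorem by a more modular route than the paper's. The paper handles $q_j\to q_j^{\alpha_j}$ in a single pass: it characterizes those $P\in I$ that survive the substitution (every $q_j$-exponent a multiple of $m_j$ and every $M_k$-exponent a multiple of $m_{a_k}$) and then runs the FGLM-style ansatz of Theorem~\ref{thm.twist} with the additional auxiliary variables $Q_j=q_j^{m_j}$ alongside $N_k=M_k^{m_{a_k}}$, comparing coefficients with respect to both $\bM$ and $\bq$. You instead factor the substitution through $q_j\to q_j^{p_j}$ followed by $q_j\to q_j^{1/m_j}$. Step~(i) is the genuinely new ingredient: the assignment $q_j\mapsto q_j^{p_j}$, $M_k\mapsto M_k^{p_{a_k}}$, $L_k\mapsto L_k$ is an injective Ore-algebra endomorphism $\phi$ of $\BO$ with image the Ore subalgebra $\BK(q_1^{p_1},\dots,q_s^{p_s},M_1^{p_{a_1}},\dots,M_r^{p_{a_r}})\la\bL\ra$, and since $\phi$ fixes every $\bL$-monomial while a Gr\"obner basis over the smaller coefficient field remains a Gr\"obner basis after enlarging it (the $S$-polynomials and their reductions are computed entirely within the smaller field), $\phi$ transports a Gr\"obner basis of~$I$ to one of $\la\phi(I)\ra_{\BO}$ with identical leading monomials, hence identical rank~$R$. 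Step~(ii) then coincides with the paper's own argument, specialized to the pure-root case $\alpha_j=1/m_j$. What your factorization buys is transparency about where the rank bound comes from: the integer numerator $p_j$ of $\alpha_j$ costs nothing, and the factor $m_1\cdots m_s\cdot m_{a_1}\cdots m_{a_r}$ is attributable entirely to root extraction, with $m_1\cdots m_s$ arising from the $\bq$-comparisons that have no analogue in Theorem~\ref{thm.twist}. The two constructions may output different ideals~$J$, but each satisfies the stated rank bound, and your version is a correct alternative.
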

\begin{proof}
Employing the notation from~\eqref{eq.qholmult} and~\eqref{eq.LMq} so
that $I$ is a left ideal in $\BO=\BK(\bq,\bM)\langle\bL\rangle$, it
has to be shown that there are sufficiently many elements in~$I$ for
which the result of the substitutions $q_j\to q_j^{\alpha_j}$, $1\leq
j\leq s$, is still in~$\BO$. This condition is equivalent to claiming
that all powers of $q_j$ are divisible by $m_j$ and that all powers of
$M_k$ are multiples of~$m_{a_k}$, for $1\leq j\leq s$ and $1\leq k\leq r$.

The rest of the proof is analogous to the proof of
Theorem~\ref{thm.twist}.  The only difference is that in addition
to~$\bN$, one has to introduce a second set of new variables
$\bQ=Q_1,\dots,Q_s$ such that $Q_j=q_j^{m_j}$, and that the
coefficient comparison then has to be performed w.r.t. $\bM$
and~$\bq$.
\end{proof}

\begin{corollary}\label{cor.subs}
Let $f_n(q)$ be a $q$-holonomic sequence that satisfies a recurrence
of the form~\eqref{eq.qhol} of order~$d$. Then for $\alpha\in\BQ$ the
sequence~$f_n(q^\alpha)$ is $q$-holonomic as well and satisfies a
recurrence of order at most $m^2\cdot d$, where $m\in\BN$ is the
denominator of~$\alpha$.
\end{corollary}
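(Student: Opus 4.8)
The plan is to derive Corollary~\ref{cor.subs} as the $r=s=1$ specialization of Theorem~\ref{thm.subs}, following the same route by which Corollary~\ref{cor.twist} descends from Theorem~\ref{thm.twist}. First I would note that a $q$-holonomic sequence $f_n(q)$ satisfying a recurrence of the form~\eqref{eq.qhol} of order~$d$ is precisely a $\pt$-finite sequence in the sense of the paper with $r=s=1$: the single operator $P=\sum_{j=0}^d c_j(q,M)L^j$ lies in $\BO=\BK(q,M)\la L\ra$, annihilates~$f$, and generates a zero-dimensional left ideal $I=\BO P$ of rank exactly~$d$ (its stairs are the monomials $1,L,\dots,L^{d-1}$, assuming $c_d\neq 0$; if one only knows $R\le d$ the bound below only improves). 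With $\alpha\in\BQ$ of denominator~$m$, Theorem~\ref{thm.subs} applies with $s=1$, $\alpha_1=\alpha$, $m_1=m$, $r=1$, $a_1=1$, $m_{a_1}=m_1=m$.

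Next I would simply read off the conclusion of Theorem~\ref{thm.subs} in this case: the substituted sequence $g_n(q)=f_n(q^{\alpha})$ is $\pt$-finite, and since $r=1$ being $\pt$-finite with respect to $\BO$ is the same as being $q$-holonomic, we obtain that $g_n(q)$ is $q$-holonomic. The rank bound from the theorem is $R\cdot m_1\cdots m_s\cdot m_{a_1}\cdots m_{a_r}$, which here is $R\cdot m_1\cdot m_{a_1}=R\cdot m\cdot m=R\cdot m^2\le d\cdot m^2$. Finally I would translate the rank bound back into a statement about recurrence order: a zero-dimensional left ideal in the univariate Ore algebra $\BO=\BK(q,M)\la L\ra$ of rank~$N$ contains an operator of order exactly~$N$ (the minimal-order element of the ideal, obtained from a Gr\"obner basis with respect to the degree order in~$L$), so $g_n(q)$ satisfies a recurrence of order at most $m^2 d$. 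Strictly speaking this recurrence has coefficients in $\BK(q,M)$; clearing denominators turns it into one with coefficients in $\BK[q,M]$, i.e., polynomials in $q$ and $q^n$, which is exactly the form~\eqref{eq.qhol}, so the order bound is preserved.

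There is essentially no obstacle here: the corollary is, as the analogous passage for Corollary~\ref{cor.twist} already signals, a direct unwinding of the general theorem in the univariate case, with the only minor point being the dictionary between "rank of the annihilating ideal" and "order of a defining recurrence", which is standard for univariate Ore algebras. If I wanted to be fully self-contained I would spell out that last equivalence, but given the machinery already developed in Section~2 it is legitimate to state the corollary with a one-line proof pointing to Theorem~\ref{thm.subs} with $r=s=1$ and $m_1=m_{a_1}=m$.

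\begin{proof}
This is the specialization of Theorem~\ref{thm.subs} to $r=s=1$. A $q$-holonomic sequence $f_n(q)$ satisfying~\eqref{eq.qhol} of order~$d$ is $\pt$-finite: the operator $P=\sum_{j=0}^d c_j(q,M)L^j\in\BO=\BK(q,M)\la L\ra$ annihilates~$f$ and generates a zero-dimensional left ideal of rank at most~$d$. Applying Theorem~\ref{thm.subs} with $\alpha_1=\alpha$, $m_1=m$ (hence also $m_{a_1}=m_1=m$) shows that $g_n(q)=f_n(q^\alpha)$ is $\pt$-finite, i.e., $q$-holonomic since $r=1$, and that it is annihilated by a zero-dimensional left ideal of rank at most $d\cdot m_1\cdot m_{a_1}=m^2 d$. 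A zero-dimensional left ideal of rank~$N$ in the univariate Ore algebra $\BO$ contains an operator of order~$N$; clearing denominators yields a recurrence of the form~\eqref{eq.qhol} of order at most $m^2 d$ for~$g_n(q)$.
\end{proof}
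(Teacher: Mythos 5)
Your proof is correct and follows exactly the route the paper intends: the paper gives no separate argument for Corollary~\ref{cor.subs}, treating it (like Corollary~\ref{cor.twist}) as an immediate specialization of the multivariate theorem to $r=s=1$, which yields the rank bound $R\cdot m_1\cdot m_{a_1}=d\,m^2$ and hence the stated order bound. Your extra care in translating rank to recurrence order and in clearing denominators is accurate and harmless.
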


It is now natural to ask whether Corollaries~\ref{cor.twist}
and~\ref{cor.subs} can be extended to $q$-holonomic sequences in more
than one variable.  Unfortunately the study of multivariate
$q$-holonomic sequences is much more involved (we even didn't give a
precise definition in this paper), and therefore the following
statement appears without proof; it is a stronger version of
Theorems~\ref{thm.twist} and~\ref{thm.subs}.
\begin{conjecture}
Multivariate $q$-holonomic sequences are closed under twisting by
complex roots of unity and under substitutions of the form $q\to
q^{\alpha}$ for $\alpha\in\BQ$.
\end{conjecture}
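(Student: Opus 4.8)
\medskip
\noindent\textbf{Towards a proof of the Conjecture.}
A natural line of attack reduces the multivariate statement to three ingredients. The first is a workable characterization of Sabbah's notion of multivariate $q$-holonomicity~\cite{Sabbah90}: a finitely generated module over the multivariate first $q$-Weyl algebra $\BW_r=\BK(\bq)[\bM]\la\bL\ra$ (or over a suitable Ore localization thereof, as used in the body of this paper) should be holonomic precisely when the characteristic variety of a good filtration has the minimal dimension~$r$ permitted by the $q$-analogue of Bernstein's inequality; holonomicity then automatically passes to sub- and quotient modules and to finite direct sums. The second ingredient is that holonomicity is preserved under restriction and extension of scalars along a finite ring extension $A\subseteq B$ of such $q$-Weyl-type algebras, a module being understood to carry compatible structures over all algebras in sight; the ``extension'' direction follows from the first ingredient by writing the module as a finite sum of holonomic submodules. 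The third ingredient is to realize twisting and the substitution $q\to q^{\alpha}$ as such finite-extension operations, and this is where the hypotheses ``root of unity'' and ``$\alpha$ rational'' enter.

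For the substitution, fix a common multiple $m$ of the denominators of $\alpha_1,\dots,\alpha_s$. Since $q_j\to q_j^{p}$ for $p\in\BN$ already preserves multivariate $q$-holonomicity --- substituting it into the defining recurrences is harmless, as $q_j^{p}$ and $(q_j^{p})^{n_j}=(q_j^{n_j})^{p}$ are again polynomial in $q_j$ resp.\ $q_j^{n_j}$ --- it suffices to treat $q_j\to q_j^{1/m}$ for all~$j$ at once. For this, introduce on the substituted sequence~$g$ the operators $M_k^{1/m}\colon g_{\bn}\mapsto(q_{a_k}^{1/m})^{n_k}g_{\bn}$, so that $(M_k^{1/m})^{m}=M_k$, together with the enlarged algebra $\BW_r^{1/m}=\BK(\bq^{1/m})[M_1^{1/m},\dots,M_r^{1/m}]\la\bL\ra$, which contains $\BW_r$ as a finitely generated module on either side. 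The substitution induces an isomorphism of $q$-Weyl algebras under which the $\BW_r$-module generated by~$f$ corresponds to the $\BW_r^{1/m}$-module generated by~$g$; hence $g$ is holonomic over $\BW_r^{1/m}$, and restriction of scalars along $\BW_r\subseteq\BW_r^{1/m}$ yields the conjecture for substitutions. For the twist, fix a common multiple $m$ of the orders of $\w_1,\dots,\w_s$, so $\w_j^{m}=1$, and set $\hat M_k\colon g_{\bn}\mapsto(\w_{a_k}q_{a_k})^{n_k}g_{\bn}$ and $\BW_r'=\BK(\bq)[\hat M_1,\dots,\hat M_r]\la\bL\ra$. The key numerical point is $\hat M_k^{m}=M_k^{m}$, so that $\BW_r$ and $\BW_r'$ are both finite over their common subalgebra $\BW_r^{[m]}=\BK(\bq)[M_1^{m},\dots,M_r^{m}]\la\bL\ra$; as above $g$ is holonomic over $\BW_r'$, hence over $\BW_r^{[m]}$, hence over $\BW_r$. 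An equivalent, more concrete, reformulation: slice the index set into the $m^r$ residue classes modulo the sublattice $m\BZ^r$; on each slice every factor $\w^{\bn}$ is constant, so the slice is, up to a constant rescaling, an honest $q^{m}$-holonomic sequence, and $g$ is reassembled from these finitely many slices along the finite covering $m\BZ^r\hookrightarrow\BZ^r$.

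The step I expect to be the genuine obstacle is the second ingredient. In the differential setting the stability of holonomicity under finite extensions of Weyl algebras is classical, following from Bernstein's inequality together with the behaviour of good filtrations under finite morphisms; but the subalgebras occurring here ($\BW_r^{[m]}$ and $\BW_r^{1/m}$) are themselves $q'$-Weyl algebras with a \emph{different} commutation parameter ($q^{m}$, respectively $q^{1/m}$, in place of~$q$), so what is really required is a form of the $q$-Bernstein inequality and of the dimension theory of $q$-characteristic varieties that is uniform in the commutation parameter; establishing this by revisiting~\cite{Sabbah90} is the crux, and is presumably why the authors state the result only as a conjecture. A second, more pedestrian, point is to fix a precise definition of multivariate $q$-holonomicity --- deliberately left open in the present paper --- that is stable under the operations above and specializes to Zeilberger's notion when $r=1$. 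Once the second ingredient is available, the reduction sketched here proves the conjecture and furnishes explicit, though far from optimal, rank bounds of the shape appearing in Corollaries~\ref{cor.twist} and~\ref{cor.subs}.
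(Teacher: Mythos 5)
You are attempting to prove something the paper explicitly declines to prove: the statement is offered as a conjecture precisely because, as the authors note, multivariate $q$-holonomicity in Sabbah's sense is substantially more delicate than the $\pt$-finite notion actually treated in Theorems~\ref{thm.twist} and~\ref{thm.subs}, and the paper does not even fix a definition. So there is no paper proof to compare against; what can be judged is whether your sketch closes the gap, and it does not --- as you yourself acknowledge. Your reduction does locate the crux correctly: one needs dimension theory (a $q$-Bernstein inequality, behaviour of good filtrations, stability of holonomicity under direct sums and finite morphisms) that works uniformly along finite extensions of $q$-Weyl algebras in which the commutation parameter itself changes (to $q^m$, to $q^{1/m}$). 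Until that ``second ingredient'' is supplied, the argument is a roadmap rather than a proof, since establishing it for the specific extensions at hand is essentially the conjecture restated in module-theoretic language.

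Two secondary points are worth flagging. Your claim that $q_j\to q_j^p$ with $p\in\BN$ ``already preserves multivariate $q$-holonomicity'' because ``substituting into the defining recurrences is harmless'' conflates $\pt$-finiteness (a system of recurrences, one per direction) with holonomicity (a dimension condition on the characteristic variety of the module); the map $q_j\mapsto q_j^p$, $M_k\mapsto M_k^p$, $L_k\mapsto L_k$ is a non-surjective endomorphism of $\BK(\bq)[\bM]\la\bL\ra$ over which the ambient algebra is merely a finite module, so this case is itself an instance of your unproved finite-extension ingredient, not something to be disposed of at the outset. Likewise, the ``more concrete reformulation'' via slicing $\BZ^r$ into residues modulo $m$ tacitly invokes closure of multivariate $q$-holonomicity under interleaving of $m^r$ sequences along the covering $m\BZ^r\hookrightarrow\BZ^r$, which is a further nontrivial closure property that would have to be established for whichever definition is fixed. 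For contrast, the paper's own proofs of the $\pt$-finite Theorems~\ref{thm.twist} and~\ref{thm.subs} take an elementary route --- an FGLM-style ansatz in auxiliary variables $N_k=M_k^{m(k)}$, reduction modulo a left Gr\"obner basis, and coefficient comparison --- which gives explicit rank bounds but controls only $\pt$-finiteness and says nothing about characteristic varieties; that is precisely why the conjecture remains open.
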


At this point it may be beneficial to discuss some simple examples to
illustrate Theorems~\ref{thm.twist} and~\ref{thm.subs} and their
implementation in our software package.  Recall the definitions for
the $q$-Pochhammer symbol
\[
  (a;q)_n := \prod_{k=0}^{n-1} \big(1-aq^k\big)
\]
and the $q$-binomial coefficient
\[
  \qbinom{n}{k}{q} := \frac{(q;q)_n}{(q;q)_k(q;q)_{n-k}}.
\]

\begin{example}
\label{ex.cqbc}
Let $f_n(q)$ be the central $q$-binomial coefficient~$\qbinom{2n}{n}{q}$.
It satisfies the recurrence
\[
  (1-q^{n+1})f_{n+1}(q) = (1+q^{n+1}-q^{2n+1}-q^{3n+2})f_n(q)
\]
which translates to the operator
\begin{equation}\label{eq.op.cqbc}
  (qM-1)L-q^2M^3-qM^2+qM+1.
\end{equation}
We choose $\w=-1$; the substitution $q\to -q$ in the above operator
is not admissible because of the odd powers of~$M$. On the other hand,
Theorem~\ref{thm.twist} guarantees that $f_n(-q)$ is also $q$-holonomic.
Indeed, the twisted sequence~$f_n(-q)$ is annihilated by the operator
\begin{multline*}
  \left(q^4 M^2-1\right)L^2 + 
  \left(\left(q^7-q^6\right)M^4-q+1\right)L - {}\\
  q^7M^6-\left(q^6-q^5+q^4\right)M^4+\left(q^4-q^3+q^2\right)M^2+q.
\end{multline*}
Note that it contains only even powers of~$M$, at the cost of
increasing the order.  Using the Mathematica
package \texttt{HolonomicFunctions}, these results can be obtained
by the following commands:
\begin{verbatim}
qbin = Annihilator[QBinomial[2n, n, q], QS[qn,q^n]]
DFiniteQSubstitute[qbin, {q, 2}]
\end{verbatim}
The first line determines the input operator~\eqref{eq.op.cqbc} from
the given mathematical expression. The second line computes the
twisted recurrence; the substitution is given as a pair $(q,m)$ and by
default $\w=e^{2\pi i/m}$ is chosen.
\end{example}

\begin{example}
\label{ex.qp}
The $q$-Pochhammer symbol satisfies the simple recurrence
\[
  (q;q)_{n+1} = (1-q^{n+1})(q;q)_n.
\]
We want to study the twisted sequence $(\w q;\w q)_n$ for $\w$
being a third root of unity. Therefore we have to compute a recurrence
for $(q;q)_n$ in which all exponents of~$M=q^n$ are divisible by~$3$:
\begin{equation}
\label{eq.rec.twqp}
\begin{split}
  & (q;q)_{n+3} - \left(q^2+q+1\right)(q;q)_{n+2} + {}\\
  & \qquad \left(q^3+q^2+q\right)(q;q)_{n+1} + \left(q^{3n+6}-q^3\right)(q;q)_n = 0.
\end{split}
\end{equation}
Substituting $q\to \w q$ into~\eqref{eq.rec.twqp} delivers a
$q$-holonomic recurrence for the twist $(\w q;\w q)_n$. The commands
to compute it are the following:
\begin{verbatim}
qp = Annihilator[QPochhammer[q, q, n], QS[qn,q^n]]
DFiniteQSubstitute[qp, {q, 3},
   Return -> Backsubstitution]
\end{verbatim}
The option \texttt{Return -> Backsubstitution} in this instance tells
the program to return the recurrence before performing the
substitution $q\to e^{2\pi i/3}q$ (see the last but one line of
Algorithm~1); this is exactly recurrence~\eqref{eq.rec.twqp} in
operator form.
\end{example}

\begin{example}
\label{ex.qp2}
The substitution $q\to\sqrt{q}$ is performed on the $q$-Pochhammer
symbol $(q;q)_n$ (see Example~\ref{ex.qp}). Theorem~\ref{thm.subs}
predicts that the resulting recurrence is of order at most~$4$,
which is sharp in this case. As an intermediate result, the operator
\begin{multline*}
  L^4 - (q^2+1)L^3 - (q^8M^2+q^6M^2-q^4-q^2)L \\
  - q^{10}M^4+q^8M^2+q^6M^2-q^4
\end{multline*}
is found in the annihilator of $(q;q)_n$. Note that both $q$ and $M$
appear with even powers. The final result is the recurrence
\begin{multline*}
  f_{n+4} - (q+1) f_{n+3} - (q^{n+4}+q^{n+3}-q^2-q) f_{n+1} \\
  + \left(-q^{2 n+5}+q^{n+4}+q^{n+3}-q^2\right)f_n=0
\end{multline*}
where $f_n=\left(\sqrt{q};\sqrt{q}\right)_n$.
This recurrence is obtained as the output of the command
\begin{verbatim}
DFiniteQSubstitute[qp, {q, 1, 2}]
\end{verbatim}
where the triple $(q,m,k)$ encodes the substitution
$q\to\omega q^{1/k}$ with $\omega=e^{2\pi i/m}$.
\end{example}

\subsection{Algorithms}

The proof of Theorem~\ref{thm.twist} gives an algorithm to construct
the left ideal~$J$ of annihilating operators for the twisted sequence.
To formulate this algorithm in pseudo-code, the notations
from~\eqref{eq.qholmult} and~\eqref{eq.LMq} and from
Theorem~\ref{thm.twist} are employed; additionally, if $T$ is a set, we
refer to its elements by $\{T_1,T_2,\dots\}$, and we use
$\mathrm{lm}_{\prec}(P)$ to denote the leading monomial of the
operator~$P$ with respect to the monomial order~$\prec$.

\medskip
\textsc{Algorithm 1.}
\vskip 1mm\hrule\vskip 1mm
\noindent\textbf{Input:} 
\hfill\parbox[t]{70mm}{
$r,s\in\BN$,\\
for $1\leq j\leq s$: $m_j\in\BN$, $\w_j\in\BC$ with $\w_j^{m_j}=1$ and\\
$\w_j^\ell\neq1$ for all $\ell<m_j$,\\
$\BO=\BK(q_1,\dots,q_s,M_1,\dots,M_r)\langle L_1,\dots,L_r\rangle$,\\
% $\BO=\BK(\bq,\bM)\langle\bL\rangle=\BK(q_1,\dots,q_s,M_1,\dots,M_r)\langle L_1,\dots,L_r\rangle$\\
a monomial order $\prec$ for $\BO$,\\
a finite set $F\subset\BO$ such that $F$ is a left Gr\"obner basis w.r.t.~$\prec$
and the left ideal ${}_{\BO}\langle F\rangle$ is zero-dimensional}\\[1mm]
\textbf{Output:}
\hfill\parbox[t]{70mm}{
a finite set $G\subset\BO$ such that $G$ is a left Gr\"obner basis w.r.t.~$\prec$
and such that for any sequence $f_{\bn}(q_1,\dots,q_s)$ with $F(f_{\bn}(\bq))=0$ we have
$G(f_{\bn}(\w_1q_1,\dots,\w_sq_s))=0$}
\vskip 1mm
\hrule
\vskip 1mm
\noindent $G=\emptyset$\\
$U=\text{set of monomials under the stairs of }F$\\
$T=\{1\}$\\
$V=\emptyset$\\
\textbf{while} $T\neq\emptyset$\\
\phantom{M} $T_0=\min_{\prec}T$\\
\phantom{M} $T=T\setminus\{T_0\}$\\
\phantom{M} $A=c_0T_0+\sum_{j=1}^{|V|} c_jV_j$\\
\phantom{M} $A'=A\text{ reduced with }F$\\
\phantom{M} clear denominators of $A'$\\
\phantom{M} substitute $M_k^a\to M_k^{a\!\!\mod m(k)}N_k^{\lfloor a/m(k)\rfloor}$ in $A'$\\
\phantom{M} write $A'$ as $\sum_{i=1}^{|U|}\sum_{j_1=0}^{m(1)-1}\cdots\sum_{j_r=0}^{m(r)-1} d_{i,\bj}M_1^{j_1}\cdots M_r^{j_r}U_i$\\
\phantom{M} equate all $d_{i,\bj}$ to zero\\
\phantom{M} solve this linear system for $c_0,\dots,c_{|V|}$ over $\BK(\bq,\bN)$\\
\phantom{M} \textbf{if} a solution exists \textbf{then}\\
\phantom{MM} substitute the solution into $A$\\
\phantom{MM} $G=G\cup\{A\}$\\
\phantom{MM} $T=T\cup\{T_0L_k: 1\leq k\leq r\}$\\
\phantom{MM} $T=T\setminus\{T_j: 1\leq j\leq |T| \land \exists_k\, \mathrm{lm}_{\prec}(G_k)\mid T_j\}$\\
\phantom{M} \textbf{else}\\
\phantom{MM} $V=V\cup\{T_0\}$\\
substitute $N_k\to M_k^{m(k)}$ and $q_j\to \w_jq_j$ in $G$\\
\textbf{return} $G$
\medskip

Similarly, Theorem~\ref{thm.subs} yields Algorithm~2 which, however,
is just a light variation of Algorithm~1 and therefore not displayed
explicitly here. Both algorithms are implemented in
\texttt{HolonomicFunctions} as the command
\texttt{DFiniteQSubstitute}, see~\cite{Ko2} and
Examples~\ref{ex.cqbc}--\ref{ex.qp2}.

With slight modifications Algorithms~1 and~2 can be applied to
inhomogeneous recurrences as well. Algebraically, an inhomogeneous
recurrence of the form
\[
  \sum_{j=0}^d c_j(q,q^n) f_{n+j}(q)=b(q,q^n)
\]
can be represented as $\big(\sum_{j=0}^d c_jL^j,b\big)$ in the left
module~$\BO^2$, modulo the relation~$(0,L-1)$. To make the algorithms
work a POT ordering has to be used. The option \texttt{ModuleBasis} of
the command \texttt{DFiniteQSubstitute} serves this purpose.

Given a root of unity~$\w\in\BC$ and a univariate operator $P\in\BW$
such that $P(f_n(q))=0$ for some sequence~$f_n(q)$, let
$\tau_{\w}(P)\in\BW$ denote the annihilating operator for the twisted
sequence $f_n(\w q)$ that is produced by Algorithm~1 (in order to
represent its output in~$\BW$, one has to clear denominators). 
Additionally we claim that $\tau_{\w}(P)=\sum_{j=0}^d c_j(q,M)L^j$ is
\emph{content-free}, i.e., $\gcd(c_0,\dots,c_d)=1$. The following
result about the nature of $\tau_w(P)$ is easily obtained.
\begin{proposition}
\label{prop.factors}
Let 
\[
  P(M,L,q) = \sum_{j=0}^d c_j(q,M)L^j \in\BW
\]
such that $\gcd(c_0,\dots,c_d)=1$ and let $\w\in\BC$ be a root of
unity of order~$m$.  Define $\ell\in\BN$ to be the largest integer such
that $P\in\BK(q)[M^\ell]\langle L\rangle$.  Then
\[
  Q(M,L)\big(\tau_{\w}(P)\big)(M,L,\w^{-1}) = R(M)\! \prod_{k=1}^{m/\!\gcd(\ell,m)}\!\!\!\!\! P(\w^kM,L,1)
\]
for some polynomial $Q\in\BK[M,L]$ and some rational function
$R\in\BK(M)$.
\end{proposition}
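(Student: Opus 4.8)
The plan is to make the output of Algorithm~1 explicit, to reinterpret the substitution $q\to\w^{-1}$ as undoing the twist, and then to invoke a cyclic group action. First I would unwind the algorithm in the univariate case: writing $\BO=\BK(q,M)\langle L\rangle$ and $\BO'=\BK(q,M^m)\langle L\rangle$ for the subalgebra of operators involving only powers of $M$ that are multiples of $m$ (encoded via $N=M^m$), the loop in Algorithm~1 computes a left Gr\"obner basis of the left ideal $J=\BO P\cap\BO'$, which, since $\BO'$ is a noncommutative principal ideal domain, is generated by a single content-free operator~$B$; the penultimate line then performs $N\to M^m$, $q\to\w q$, so that $\tau_\w(P)(M,L,q)=B(M,L,\w q)$ up to a unit of $\BK[q^{\pm1},M^{\pm1}]$. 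Hence $\bigl(\tau_\w(P)\bigr)(M,L,\w^{-1})=B(M,L,1)$ up to a unit of $\BK[M^{\pm1}]$, and such a unit can be pushed into the factor $R$ of the claimed identity. It remains to identify the specialization $B(M,L,1)$.

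Next I would introduce, for every $m$-th root of unity $\zeta$, the algebra automorphism $\sigma_\zeta$ of $\BO$ that fixes $L$ and $q$ and sends $M\mapsto\zeta M$; this is well defined because it respects $LM=qML$, and $\BO'$ is precisely the subring fixed by all of them. Since $B\in\BO'$ and $B\in\BO P$, applying $\sigma_\zeta$ gives $B=\sigma_\zeta(B)\in\BO\,\sigma_\zeta(P)$, i.e.\ $\sigma_\zeta(P)$ right-divides $B$, for every~$\zeta$. Because $P$ involves only powers of $M$ that are multiples of~$\ell$, the conjugate $\sigma_\zeta(P)$ depends on $\zeta$ only through $\zeta^\ell$, so among all the $\sigma_\zeta(P)$ there are exactly $r:=m/\gcd(\ell,m)$ distinct ones, namely $P(\w^kM,L,q)$ for $k=1,\dots,r$ (one of which is $P$ itself, as $\w^{r\ell}=1$); write them $P_1,\dots,P_r$. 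Thus $B$ lies in $\bigcap_{k=1}^r\BO P_k$, which in a noncommutative principal ideal domain is the left ideal generated by $\Lambda:=\mathrm{lclm}(P_1,\dots,P_r)$, whence $\deg_L B\ge\deg_L\Lambda$; conversely, if $\Lambda$ is normalized to be monic in~$L$ then each $\sigma_\zeta$ permutes the $P_k$, hence stabilizes $\BO\Lambda$, hence multiplies $\Lambda$ by a unit that monicity forces to be~$1$, so $\Lambda\in\BO'$ and therefore $\Lambda\in\BO P\cap\BO'=J=\BO'B$, giving $\deg_L B\le\deg_L\Lambda$. Hence $B$ and $\Lambda$ are associates, and in particular $\deg_L B=\deg_L\Lambda\le\sum_{k=1}^r\deg_L P_k=r\cdot d$.

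Finally I would specialize $q\to1$, where $L$ and $M$ commute and $\BO$ degenerates to the commutative polynomial ring $\BK(M)[L]$. Each right-divisibility $P_k\mid_r B$ in $\BO$ then becomes $P(\w^kM,L,1)\mid B(M,L,1)$ in $\BK(M)[L]$, so the least common multiple of the $r$ polynomials $P(\w^kM,L,1)$ divides $B(M,L,1)$; and since $\deg_L B(M,L,1)\le\deg_L B\le r\cdot d$, while $\prod_{k=1}^r P(\w^kM,L,1)$ has $L$-degree $r\cdot d$ as soon as the leading $L$-coefficient of $P$ does not vanish at $q=1$, one gets that $B(M,L,1)$ agrees with $\prod_{k=1}^r P(\w^kM,L,1)$ up to a factor in $\BK(M)[L]$. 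Clearing denominators in this relation and reinstating the units from the first step produces a polynomial $Q\in\BK[M,L]$ and a rational function $R\in\BK(M)$ with $Q(M,L)\bigl(\tau_\w(P)\bigr)(M,L,\w^{-1})=R(M)\prod_{k=1}^{m/\gcd(\ell,m)}P(\w^kM,L,1)$, as claimed. In the generic situation, where the $P(\w^kM,L,1)$ are pairwise coprime in $\BK(M)[L]$, the least common multiple already equals the product, the two sides are associates, and $Q$ can be taken in $\BK(M)$; this is exactly what happens in Example~\ref{ex.cqbc}, where the two sides coincide up to sign.

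The step I expect to be the real obstacle is the passage $q\to1$ together with the comparison of $B(M,L,1)$ with the literal product. The difficulty is that over a noncommutative ring a least common left multiple need not right-divide any ordering of the product of the operators in question, and that the specialization $q\to1$ need not commute with the formation of the contracted ideal $J=\BO P\cap\BO'$: the leading $L$-coefficient of $P$ can vanish at $q=1$, dropping degrees, and the conjugates $P(\w^kM,L,1)$ can acquire spurious common factors. One therefore has to argue with $\Lambda=\mathrm{lclm}(P_1,\dots,P_r)$ and its specialization rather than with the product directly, and to accept the extra factors $Q$ and $R$ as the bookkeeping that absorbs these degeneracies. The earlier steps (reading off the algorithm's output, checking that $\sigma_\zeta$ is an automorphism, the monicity normalization of $\Lambda$, and the handling of contents and units) are routine.
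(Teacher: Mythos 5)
The paper itself offers no proof of Proposition~\ref{prop.factors}; it is introduced with the phrase ``is easily obtained,'' so there is no argument in the text to compare yours against. What you propose is therefore evaluated on its own merits.

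Your overall architecture is sensible and, in my judgment, is close to whatever the authors had in mind: identify $\tau_\w(P)(M,L,\w^{-1})$ with $B(M,L,1)$, where $B$ generates $J=\BO P\cap\BO'$ in the Ore algebra $\BO'=\BK(q,M^m)\langle L\rangle$; introduce the automorphisms $\sigma_\zeta\colon M\mapsto\zeta M$ of $\BO$, which fix $\BO'$; deduce that $B$ is a common left multiple of the $r=m/\gcd(\ell,m)$ distinct conjugates $P_k=P(\w^kM,L,q)$; and then pass to $q=1$, where $\BO$ commutes. The verification that $\sigma_\zeta$ respects $LM=qML$, the count of distinct conjugates via $\zeta^\ell$, the fact that $P$ is among the $P_k$ (since $\w^{r\ell}=1$), and the monicity/Galois-fixed argument showing the lclm $\Lambda$ lies in $\BO'$, hence is an associate of $B$ --- all of that is correct and cleanly done.

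Where the argument has a genuine gap is exactly where you flag it, but you stop at flagging rather than closing it, and the gap is substantive. Two separate issues. First, the right-divisibility $P_k\mid_r B$ holds in $\BO$, so the cofactor $Q_k$ with $B=Q_kP_k$ lives in $\BK(q,M)\langle L\rangle$ and may have a pole along $q=1$; clearing its denominator $s_k(q,M)$ gives $s_kB=\tilde Q_kP_k$ in $\BW$, but if $s_k(1,M)=0$ the specialized relation collapses to $0=\tilde Q_k(M,L,1)P_k(M,L,1)$ and yields no divisibility statement. You need an argument (e.g.\ working with the content-free normal forms and a Gauss-type lemma for $\BW$) that $s_k$ can be chosen coprime to $q-1$, or else a different route to $P_k(M,L,1)\mid B(M,L,1)$. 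Second, and more seriously, even granting $\operatorname{lcm}_k P_k(M,L,1)\mid B(M,L,1)$ in $\BK(M)[L]$, this is the \emph{wrong direction} for the stated identity: to produce $Q\in\BK[M,L]$ and $R\in\BK(M)$ with $Q\cdot B(M,L,1)=R\cdot\prod_kP_k(M,L,1)$ you need $B(M,L,1)$ to \emph{divide} $\prod_kP_k(M,L,1)$ in $\BK(M)[L]$. What you have shown is that the lcm divides $B(M,L,1)$ and that both $B(M,L,1)$ and the product are $\BK(M)$-multiples of the lcm when the $P_k(M,L,1)$ are pairwise coprime. In the degenerate case you acknowledge, $B(M,L,1)=\operatorname{lcm}\cdot S$ for some extraneous factor $S\in\BK(M)[L]$, and nothing you have said forces $S$ to divide $\prod_kP_k(M,L,1)/\operatorname{lcm}$; absent that, the quotient $\prod/B$ need not be a polynomial in $L$, and the identity fails for the stated shape of $Q$ and $R$. ``Accept the extra factors $Q$ and $R$ as bookkeeping'' does not by itself supply the required divisibility. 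To finish, one would want either a direct argument that the $q=1$ specialization of the Ore lclm $\Lambda$ divides the commutative product (for instance by tracking how the iterated pairwise lclm/gcrd degree identity degenerates), or a construction that bypasses the lclm by exhibiting an explicit left multiple of $P$ in $\BO'$ whose $q=1$ specialization is manifestly (a $\BK(M)$-multiple of) $\prod_kP_k(M,L,1)$.
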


\subsection{Behavior of the Newton Polygon Under\\ Twisting}

In this section it is studied how the Newton polygon of a univariate
operator behaves under twisting.  Following~\cite{Ga7}, consider the
\emph{Newton polygon}~$N(P)$ of an operator $P\in\BW$, i.e., the
convex hull of the exponents $(a,b)$ of the monomials $M^bL^a$
of~$P$. The Newton polygon of a (possibly inhomogeneous) recurrence
$P(f_n(q))=b(q,q^n)$, $P\in\BW$, is defined to be~$N(P)$.  Let $LN(P)$
denote the \emph{lower convex hull} of~$N(P)$. $LN(P)$ consists of a
finite union of non-vertical line segments together with two vertical
rays. Each line segment has a \emph{slope} and we denote by $S(P)$ the
\emph{set of slopes} of~$LN(P)$.
An example will clarify these notions.
\begin{example}
\label{ex.newton}
Consider the inhomogeneous recurrence
\begin{equation}\label{eq.reci41}
\begin{split}
& q^{2n+2} \big(q^{n+2}-1\big) \big(q^{2n+1}-1\big) f(n+2) -{}\\
& \big(q^{4n+4}-q^{3n+3}-q^{2n+3}-q^{2n+1}-q^{n+1}+1\big)\\
& \quad\times\big(q^{n+1}-1\big)^2 \big(q^{n+1}+1\big) f(n+1) +{}\\
& q^{2n+2} \big(q^n-1\big) \big(q^{2 n+3}-1\big) f(n) ={}\\
& q^{n+1} \big(q^{n+1}+1\big) \big(q^{2 n+1}-1\big) \big(q^{2 n+3}-1\big)
\end{split}
\end{equation}
whose left-hand side is $P(f_n(q))$ where the operator~$P$ is given by
\begin{align*}
  & \left(q^5M^5-q^3M^4-q^4M^3+q^2M^2\right)L^2 +{}\\
  & \left(-q^7M^7 + 2q^6M^6 + (q^6+q^4)M^5 - (q^5+q^4+q^3)M^4 -{}\right.\\
  & \quad \left.(q^4+q^3+q^2)M^3 + (q^3+q)M^2 + 2qM-1 \right)L +{}\\
  & q^5M^5-q^5M^4-q^2M^3+q^2M^2.
\end{align*}
Then $N(P)$ is the hexagon with vertex set
\[
  \big\{(0,2),(1,0),(2,2),(2,5),(1,7),(0,5)\big\},
\]
which corresponds to the smallest polygon depicted in
Figure~\ref{fig.newtonplot}.  The lower Newton polygon $LN(P)$
consists of the two line segments which connect the points $(0,2)$,
$(1,0)$, and~$(2,2)$, as well as the two vertical rays starting from
$(0,2)$ and~$(2,2)$. The set of slopes $S(P)$ is easily seen to be
$\{-2,2\}$. 
\end{example}

\begin{proposition}
\label{prop.slopes}
Fix $P \in \BW$ and $\w\in\BC$ a complex $m$-th root of unity. Then
$\tau_{\w}(P) \in \BK(q)[M^m]\la L\ra$ and $S(P)\subset
S(\tau_{\w}(P))$.
\end{proposition}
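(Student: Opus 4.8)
The plan is to get the first assertion straight from the construction underlying Algorithm~1, and to reduce the slope inclusion to a multiplicativity property of Newton polygons under multiplication in~$\BW$.

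\textbf{First assertion ($\tau_\w(P)\in\BK(q)[M^m]\la L\ra$).} By construction, $\tau_\w(P)$ is produced, after the substitutions $N\to M^m$ and $q\to\w q$, from an operator $\hat P$ that lies in the left ideal ${}_{\BO}\la P\ra$ and all of whose monomials $M^bL^a$ satisfy $m\mid b$ --- this divisibility is exactly the condition Algorithm~1 enforces so that the substitution $q\to\w q$ is admissible. Substituting $q\to\w q$ in the coefficients neither creates nor destroys monomials, so the $M$-exponents stay divisible by~$m$; and the remaining normalizations (clearing denominators, dividing by the content) multiply and divide by polynomials in $q,M$ that do not involve~$L$. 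The content stays in $\BK(q)[M^m]$ because the subring of $\BK(q)[M]$ fixed by all substitutions $M\mapsto\zeta M$ with $\zeta^m=1$ equals $\BK(q)[M^m]$, and a gcd of such invariant polynomials is again invariant. Hence $\tau_\w(P)\in\BK(q)[M^m]\la L\ra$.

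\textbf{The key lemma.} The heart of the slope inclusion is that $N(AB)=N(A)+N(B)$ (Minkowski sum) for all $A,B\in\BW$; consequently $LN(AB)=LN(A)+LN(B)$ and $S(AB)=S(A)\cup S(B)$. I would prove this by a convexity argument: at a vertex $(c,d)$ of $N(A)+N(B)$, a linear functional maximized there in a direction generic with respect to the edges of $N(A)$ and~$N(B)$ forces the decomposition $(c,d)=(i,p)+(j,\ell)$ into a monomial exponent of~$A$ and one of~$B$ to be unique, so the coefficient of $M^dL^c$ in~$AB$ is a single product of two nonzero coefficients times a power of~$q$ and is therefore nonzero; thus every vertex of $N(A)+N(B)$ survives in~$N(AB)$, and since $N(AB)\subseteq N(A)+N(B)$ always, the two polygons coincide. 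A special case I will use repeatedly: multiplying or dividing an operator by a polynomial in $q,M$ not involving~$L$ changes its Newton polygon only by adding or removing a vertical segment on the axis $a=0$, hence translates $LN$ vertically and leaves $S$ unchanged.

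\textbf{Linking $\tau_\w(P)$ to~$P$.} Write $\hat P=UP$ with $U\in\BO$ and pick $d\in\BK[q,M^m]$ clearing the $M$-denominators of~$U$ (possible since every nonzero polynomial in $q,M$ divides one in $\BK[q,M^m]$, e.g.\ the product of its substitutes under $M\mapsto\zeta M$), so that $V:=dU\in\BW$ and $d\hat P=VP\in\BW$. Substituting $q\to\w q$ in the operator $d\hat P$ (legitimate since its $M$-exponents are multiples of~$m$) leaves its Newton polygon unchanged and produces $d(\w q,M)$ times the operator one gets by the same substitution in~$\hat P$; the latter, after the $L$-free normalizations above, is~$\tau_\w(P)$. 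Chaining these observations with the lemma yields
\[
  S\big(\tau_\w(P)\big)=S(d\hat P)=S(VP)=S(V)\cup S(P)\supseteq S(P).
\]
The step that really needs care is this last chain of identities --- verifying that twisting by $q\to\w q$, clearing the remaining denominators, and passing to a content-free representative in~$\BW$ each move $LN$ only by a vertical translation --- but the multiplicativity lemma makes every one of these steps routine. One might instead hope to read the inclusion off Proposition~\ref{prop.factors}, but specializing $q$ to a root of unity can collapse the Newton polygon unpredictably, so the route through the left ideal seems cleaner.
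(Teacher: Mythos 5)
Your proof is correct and follows essentially the same route as the paper: write $\tau_{\w}(P)$ as (the $q\to\w q$ twist of) a left multiple $QP$ of $P$, and use the Minkowski-sum behavior of Newton polygons under multiplication in $\BW$ to conclude $S(P)\subseteq S(\tau_{\w}(P))$. The paper simply cites \cite[Prop.~2.2]{Ga7} for $LN(QP)=LN(Q)+LN(P)$ and is terse about the substitution $q\to\w q$ and the $L$-free normalizations (denominator clearing, content removal); you prove the Minkowski lemma yourself and track those extra steps explicitly, which fills in details the paper elides but does not change the argument.
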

\begin{proof}
By definition, our algorithm finds a polynomial $Q \in \BW$ such that
$\tau_{\w}(P)=QP \in \BK(q)[M^m]\la L\ra$.  In \cite[Prop.2.2]{Ga7} it
is shown that $LN(QP)=LN(Q)+LN(P)$, where the plus operation is the
\emph{Minkowski sum}. Since the slopes of the Minkowski sum is the
union of the slopes, it follows that $S(P) \subset S(\tau_{\w}(P))$.
\end{proof}
Using Proposition~\ref{prop.factors} one even gets equality instead of
the inclusion. However, if the Newton polygons of inhomogeneous
recurrences are considered, the set of slopes can strictly grow under
twisting; this will be demonstrated in Section~\ref{sub.41}.

Note that every edge of $N(P)$ is either an edge of $LN(P)$, or an edge
of $U\!N(P)$ (the \emph{upper convex hull} of the exponents of~$P$), or a 
vertical edge. Proposition~\ref{prop.slopes} applies to $U\!N(P)$ as well,
by reversing $q$ to~$1/q$.

\section{Applications in Quantum Topo-\\ logy}
\label{sec.QT}

\subsection{The Colored Jones Polynomial of a Knot}
 
Quantum knot theory is a natural source of $q$-holonomic sequences. 
A knot~$K$ is the smooth embedding of a circle in 3-dimensional space~$\BR^3$, 
up to isotopy. The {\em colored Jones polynomial} 
\[
  \big(J_{K,n}(q)\big)_{n\in\BN} \in \big(\BZ[q^{\pm 1}]\big)^{\BN}
\]
of a knot~$K$ is a sequence of Laurent 
polynomials with the normalization that
$J_{K,1}(q)=1$ and $J_{\text{Unknot},n}(q)=1$ for all~$n$. $J_{K,2}(q)$ is the 
famous \emph{Jones polynomial}~\cite{Jo}.  
For an introduction to the polynomial invariants of knots that
originate in quantum topology see~\cite{Kf,Jo,Tu1,Tu2} and the book~\cite{Ja}
where all the details of the quantum group theory can be found. Up-to-date 
computations of several polynomial invariants of knots
are available in~\cite{B-N}. The colored Jones polynomial $J_{K,n}(q)$ is 
a $q$-holonomic sequence~\cite{GL1};
as a canonical (homogeneous) recurrence relation
we choose the one with minimal order; this is the so-called 
\emph{noncommutative $A$-polynomial} $A_K(M,L,q) \in \BW$ of a knot~$K$~\cite{Ga1}. 
An inhomogeneous recurrence is often available, typically of 
smaller size~\cite{GS2,Ga3}. Theorem~\ref{thm.twist} has the following corollary.
\begin{corollary}
There exists a twisting map
\[
  \text{Knots} \, \times \{\text{complex roots of $1$}\} \longto \BW
\]
defined by $(K,\w) \mapsto A_{K,\w}(M,L,q)$
with the following properties:
\begin{itemize}
\item[(a)] $A_{K,\w}(M,L,q)=\tau_{\w}(A_{K,1}(M,L,q))$ and the base case
$A_{K,1}(M,L,q)=A_K(M,L,q)$ is determined by the colored Jones polynomial
$J_{K,n}(q)$.
\item[(b)] For every complex root of unity $\w$, $J_{K,n}(\w q)$ is annihilated
by $A_{K,\w}(M,L,q)$.
\item[(c)] If $\w$ has order $m$, then $ A_{K,\w}(M,L,q) \in \BK(q)[M^m]\la L\ra$.
\end{itemize}
\end{corollary}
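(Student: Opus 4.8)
The plan is to assemble the statement from machinery already in place: the $q$-holonomicity of the colored Jones polynomial, the twisting operator $\tau_{\w}$ attached to Algorithm~1, and Proposition~\ref{prop.slopes} for the shape of the result. Nothing genuinely new is needed; the work is in identifying which earlier result supplies which clause and in pinning down the normalizations that make the map single-valued.

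First I would fix the base case. By~\cite{GL1} the sequence $\big(J_{K,n}(q)\big)_{n\in\BN}$ is $q$-holonomic, and among all the homogeneous recurrences of the form~\eqref{eq.qhol} that it satisfies there is one of minimal order; clearing denominators and removing the content yields a well-defined operator $A_K(M,L,q)\in\BW$, the noncommutative $A$-polynomial of~\cite{Ga1}. Set $A_{K,1}(M,L,q):=A_K(M,L,q)$; this is a function of the knot~$K$ alone (given the normalization conventions), so it provides the base case asserted in~(a).

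Next I would define the map on the remaining argument by $A_{K,\w}(M,L,q):=\tau_{\w}\big(A_{K,1}(M,L,q)\big)$, where $\tau_{\w}$ is the operator produced by Algorithm~1 as described just before Proposition~\ref{prop.factors}, applied in the univariate setting $r=s=1$ with $\BO=\BK(q,M)\la L\ra$, a fixed monomial order, and the output represented content-free in~$\BW$ after clearing denominators. With those choices $\tau_{\w}$ is a deterministic function, so $(K,\w)\mapsto A_{K,\w}(M,L,q)$ is well-defined; this gives the map and formula~(a). Property~(b) is then immediate from the defining property of $\tau_{\w}$ (equivalently from Corollary~\ref{cor.twist}): since $A_{K,1}\big(J_{K,n}(q)\big)=0$, the operator $A_{K,\w}=\tau_{\w}(A_{K,1})$ annihilates the twisted sequence $J_{K,n}(\w q)$ for every complex root of unity~$\w$. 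Property~(c) is exactly Proposition~\ref{prop.slopes} specialized to $P=A_K$: if $\w$ has order~$m$ then $\tau_{\w}(P)\in\BK(q)[M^m]\la L\ra$, hence $A_{K,\w}(M,L,q)\in\BK(q)[M^m]\la L\ra$.

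The one point that deserves care — the closest thing to an obstacle — is well-definedness. Both $A_K$ and $\tau_{\w}$ must be canonically pinned down (minimal order plus content-free normalization for the former; a fixed monomial order and content-free normalization for the output of Algorithm~1 for the latter), since otherwise $A_{K,\w}$ would only be determined up to a left multiple by an element of $\BW$. Once those conventions are declared, the three properties follow by directly reading off the earlier results, so no further argument is required.
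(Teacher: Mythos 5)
Your proposal is correct and follows the same route as the paper, which in fact gives no explicit proof of this corollary beyond saying it follows from Theorem~\ref{thm.twist}. You correctly supply the implicit details: the base case comes from the $q$-holonomicity of the colored Jones polynomial~\cite{GL1} and the minimal-order, content-free normalization of the noncommutative $A$-polynomial~\cite{Ga1}; clause~(b) is the defining property of $\tau_{\w}$ from Algorithm~1 (equivalently the univariate Corollary~\ref{cor.twist}); and clause~(c) is precisely the first assertion of Proposition~\ref{prop.slopes} applied to $P=A_K$. Your emphasis on well-definedness (fixing a monomial order and a content-free normalization so that $\tau_{\w}$ and $A_K$ are genuine functions rather than ideal-level objects) is a small but welcome sharpening of a point the paper leaves tacit.
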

The above corollary also holds for the inhomogeneous non-commutative 
$A$-polynomial, too.

\subsection{Examples of Noncommutative {\large $A$}-Polyno\-mi\-als of Knots}

Although the noncommutative $A$-polynomial of a knot is essentially a 
three-variate polynomial, it is a difficult one to compute or to guess.
In fact, a conjectured two-variate specialization of it, the so-called
\emph{$A$-polynomial} of a knot (defined in \cite{CCGLS}) is already hard to 
compute and even unknown for some knots with only $9$~crossings. For an updated list of 
$A$-polynomials of knots, see~\cite{Cu}. There are two 1-parameter families
of knots with known $A$-polynomials, namely the twist knots $K_p$~\cite{HS}
and the $(-2,3,3+2p)$ pretzel knots $K\!P_p$~\cite{GM}, 
depicted in Figure~\ref{fig.knots}.
\begin{figure}
\centering
\epsfig{file=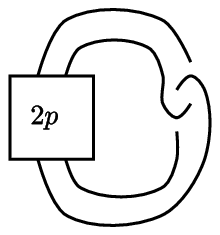, width=.95in}
\qquad
\epsfig{file=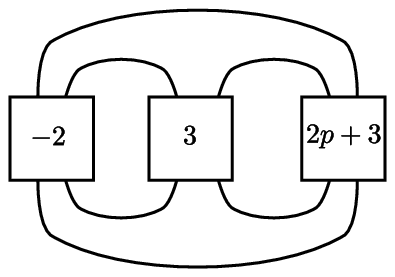, width=1.5in}\\[1em]
\epsfig{file=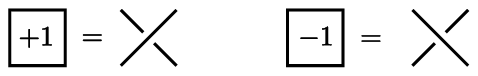, width=1.5in}
\caption{
Twist knot~$K_p$ (left) and $(-2,3,2p+3)$ pretzel knot $K\!P_p$ (right)
where an integer~$m$ inside a box indicates the number 
of $|m|$ half-twists, right-handed (if $m>0$) or left-handed (if $m<0$).}
\label{fig.knots}
\end{figure}
For these two families of knots, the (inhomogeneous) noncommutative
$A$-polynomials have been computed or guessed only for a few
particular values of the parameter~$p$.  For the twist knots $K_p$,
they were computed with a certificate in~\cite{GS2} for
$p=-14,\dots,15$.  For the pretzel knots $K\!P_p=(-2,3,3+3p)$, they
were guessed by the authors in~\cite{GK} for $p=-5,\dots,5$.  The
results of twisting these recurrences by $\w=-1$ can be found in
\begin{center}
\texttt{www.math.gatech.edu/$\sim$stavros/publications/} \\
\texttt{twisting.qholonomic.data/}
\end{center}

\subsection{The {\large $4_1$} Knot}
\label{sub.41}

As a case study we investigate the twist knot~$K_{-1}$ which appears
as knot~$4_1$ in the knot atlas~\cite{B-N}. The inhomogeneous
recurrence for its colored Jones polynomial is given
by~\eqref{eq.reci41}; see \cite{GL1,Ga1}. 
Table~\ref{tab.41data} shows the sizes and
exponents of the twisted recurrences and demonstrates that they grow
rapidly with the order~$m$ of the root of unity.
\begin{table}
\centering
\caption{Data for the twisted inhomogeneous recurrences of the $4_1$ knot;
the integer~$m$ denotes the order of the root of unity by which the recurrence
is twisted and its size is given in terms of Mathematica ByteCount.}
\label{tab.41data}
\begin{tabular}{|c|c|c|c|c|c|} \hline
$m$ & $1$ & $2$ & $3$ & $4$ & $5$ \\ \hline
size in KB & $3$ & $80$ & $3867$ & $13460$ & $68477$ \\ \hline
$q$-exponent & $7$ & $58$ & $327$ & $698$ & $1661$ \\ \hline
$L$-exponent & $2$ & $5$ & $8$ & $11$ & $14$ \\ \hline
$M$-exponent & $7$ & $22$ & $81$ & $124$ & $235$ \\ \hline
\end{tabular}
\end{table}

The Newton polygons of the twisted (inhomogeneous) recurrences for the
orders $m=1,\dots,5$ are given in Figure~\ref{fig.newtonplot} (recall
that for the Newton polygon of an inhomogeneous recurrence, we
consider just the homogeneous part of that recurrence). They are
plotted in $(L,M^m)$ coordinates, which means that a point $(a,b)$ in
the Newton polygon for a certain~$m$ represents the monomial
$M^{bm}L^a$. Note that the set of slopes is $\{-2,2\}$ for the input
recurrence~\eqref{eq.reci41}, but that it is $\{-2,0,2\}$ for the
Newton polygons of the twisted recurrences.
\begin{figure}
\centering
\epsfig{file=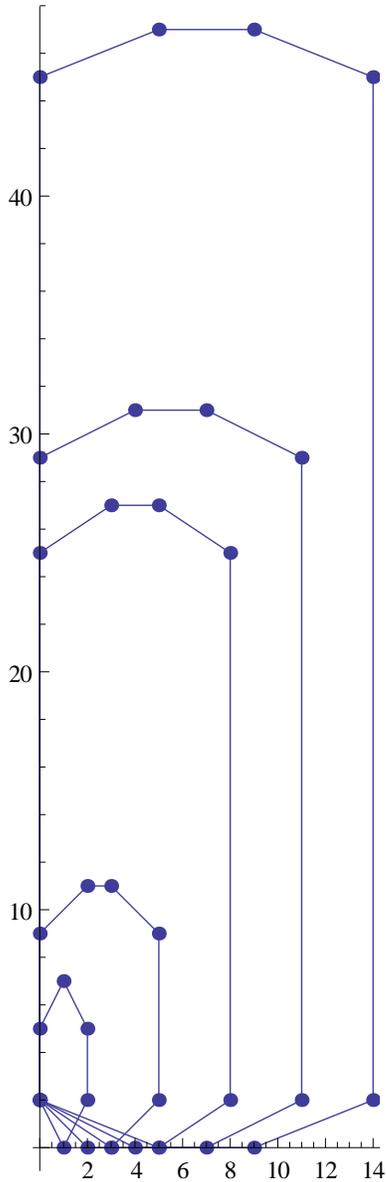, width=2in}
\caption{
The Newton polygon of the twisted (inhomogeneous) recurrences for the knot~$4_1$ in $(L,M^m)$-space;
note that the slopes appear jolted due to the use of $(L,M^m)$ coordinates.}
\label{fig.newtonplot}
\end{figure}

\subsection{An Application of Twisting in Quantum\\ Topology}

In this section we discuss in brief an application of twisting to
asymptotics questions in quantum topology. For further details and the
role of recurrences, see~\cite{DGLZ,DG,Ga8}.

The Kashaev invariant
$\la K \ra_n$ of a knot $K$ is given by \cite{Ka,MM}
$$
\la K \ra_n = J_{K,n}(e^{2 \pi i/n}).
$$
The \emph{Volume Conjecture} relates the leading asymptotics of the Kashaev invariant
to hyperbolic invariants of the knot complement. More precisely, the Volume
Conjecture states that for a hyperbolic knot $K$ we have:
$$
\lim_n \frac{1}{n} \log|\la K \ra_n|=\frac{\text{vol(K)}}{2\pi}
$$
where $\text{vol}(K)$ is the hyperbolic volume of $K$ \cite{Th}. 
It was observed by
Zagier and the first author that one can numerically compute $\la K \ra_n$
in $O(n)$ time given a recurrence relation for $J_{K,n}(q)$. Zagier raised
questions concerning the expansion of the Kashaev invariant around other roots
of unity (the original Volume Conjecture is centered around $\w=1$).
Given a recurrence relation for $J_{K,n}(\w q)$, one can compute those 
asymptotics in linear time. This will be studied in detail in forthcoming
work \cite{DG,GZ}.

\section{Acknowledgments}
The first named author wishes to thank T. Dimofte and D. Zagier for
many stimulating conversations, and the Max Planck Institute in Bonn
for their superb hospitality.  The second named author was employed by
the Research Institute for Symbolic Computation (RISC) of the Johannes
Kepler University in Linz, Austria, while carrying out the research
for the present paper.

%\bibliographystyle{abbrv}
%\bibliography{biblio}

\end{document}